\def\R{\mathbb{R}}
\def\Z{\mathbb{Z}}
\renewcommand{\H}{\mathcal{H}}
\newcommand{\N}{\mathbb{N}}
\newcommand{\Q}{\mathcal{Q}}
\newcommand{\Bad}{\mathcal{B}}
\newcommand{\Good}{\mathcal{G}}
\newcommand{\Sn}{{\mathbb{S}^{d-1}}}
\newcommand{\Rn}{{\R}^d}
\newcommand{\Ln}{{\mathcal{L}}^d}
\newcommand{\xy}{^\xi_y}
\newcommand{\sm}{\setminus}
\newcommand{\dx}{\, \mathrm{d} x}
\newcommand{\Mb}{\mathcal{M}_b}
\renewcommand{\dh}{\, \mathrm{d} \mathcal{H}^{d-1}}
\newcommand{\hn}{\mathcal{H}^{d-1}}
\newcommand{\ho}{\mathcal{H}^0}
\newcommand{\Mnn}{{\mathbb{M}^{d\times d}_{sym}}}
\newcommand{\I}{\mathrm{I}^{\sigma,\xi}_y}
\newcommand{\ol}{\overline}
\theoremstyle{plain}
\theoremstyle{plain}
\newtheorem{theorem}{Theorem}[section]
\newtheorem{lemma}[theorem]{Lemma}
\theoremstyle{definition}
\newtheorem{definition}[theorem]{Definition}
\theoremstyle{remark}
\newtheorem{remark}[theorem]{Remark}
\numberwithin{equation}{section}
\title{A general compactness theorem in $G(S)BD$}
\author{Antonin Chambolle}
\address{CEREMADE, CNRS, Universit\'e Paris-Dauphine PSL, France
  and Mokaplan (INRIA/CNRS/PSL)}
\email[Antonin Chambolle]{antonin.chambolle@ceremade.dauphine.fr}
\author{Vito Crismale}
\address{Dipartimento di Matematica Guido Castelnuovo, Piazzale Aldo Moro 5, 00185 Roma, Italy}
\email[Vito Crismale]{vito.crismale@mat.uniroma1.it}
\begin{document}
\maketitle
\begin{abstract}
  We give a new, simpler proof of a compactness result in $GSBD^p$, $p>1$,
  by the same authors, which is also valid in $GBD$ (the case $p=1$),
  and shows that bounded sequences converge a.e., after removal of a suitable
  sequence of piecewise infinitesimal rigid motions, subject to a fixed partition.
\end{abstract}

\section{Introduction}

\textit{Generalized (special) functions with bounded deformation} ($G(S)BD$) have
been introduced by G.~Dal Maso~\cite{DM13} in order to properly tackle
\textit{free discontinuity problems}~\cite{DGA,AFP} in linearized elasticity,
and in particular the minmization of the Griffith functional
\begin{equation}\label{eq:Griffith}
  \int_{\Omega\setminus K} \mathbb{C}e(u):e(u)dx + \gamma\hn(K),
\end{equation}
introduced in~\cite{FM} to model and approximate brittle fracture growth in linear
elastic materials. In this functional, $\Omega\subset\R^d$ is a bounded
$d$-dimensional domain (in practice $d\in\{2,3\}$), $u$ a vectorial displacement,
expected to be smooth, with symmetrized gradient $e(u)=(Du+Du^T)/2$,
except across a $(d-1)$-dimensional fracture set $K$.
The tensor $\mathbb{C}$ contains the physical constants of the problem, and
defines a positive definite quadratic form on symmetric tensors, while $\gamma>0$
is the toughness of the material. Showing existence to minimizers to this
functional has been a difficult task, developed over many years. 
The situation mostly evolved after~\cite{DM13} introduced for the first time
a reasonable energy space for a weak form of~\eqref{eq:Griffith}, where
$K$ is replaced with $J_u$, the intrinsic jump set of $u$~\cite{DelNin}.
Existence results could then be proved~\cite{BelCosDM98, FriPWKorn, CC-JEMS,CFI19,CCI19,CC-CalcVar} for weak,
then ``strong'' minimizers (that is, for the original problem in $(u,K)$). Most
of these works rely upon a rigidity result for displacements with small jumps,
established in~\cite{CCF16}.

In particular, the main result in~\cite{CC-JEMS} is a compactness result in $GSBD^p$, the
subspace of $GSBD$ (which is defined precisely in Section~\ref{sec:prelim})
of displacements with $p$-integrable symmetrized gradient
and jump set of finite surface. In this result, a sequence which is bounded
in energy (roughly, \eqref{eq:Griffith}, with the Lagrangian replaced with $|e(u)|^p$) will converge up to subsequences
either to a $GSBD^p$ limit $u(x)$ or to $+\infty$ (with some
appropriate semicontinuity properties). This is not really an issue
for the study of~\eqref{eq:Griffith}, since replacing $u$ with $0$ where
it is infinite, we recover that the limit of a minimizing sequence is
a minimizer.

However, it was observed in~\cite{Fri19, ChaCri20AnnSNS} that this compactness result is not
sufficient for studying more general, non-homogeneous variational problems,
where the Lagrangian is not minimal at $0$. In that case, one has to study more
finely what happens in the ``infinity'' set. Following similar (yet far more precise) results
in the scalar case~\cite{Fri19}, the authors could show in~\cite{ChaCri20AnnSNS} a
more complete compactness result, and in particular the existence of a Caccioppoli
partition where, on each set of the partition, the sequence converges to a finite
limit after substraction of a suitable sequence of infinitesimal rigid motions
(affine functions with skew-symmetric gradients).

In addition, S.~Almi and E.~Tasso~\cite{AlmiTasso} recently extended~\cite{CC-JEMS},
with a different proof, to sequences merely bounded in $GBD$ (roughly, the
case $p=1$ in~\cite{CC-JEMS}), while the proof in~\cite{ChaCri20AnnSNS}, relying on a fine
result of~\cite{CagChaSca20} valid only for $p>1$, would not work in $GBD$.

The purpose of this note is to give an alternative proof of the main compactness result
of~\cite{ChaCri20AnnSNS},
which does not rely on~\cite{CagChaSca20} and is also valid in $GBD$, thus permitting to deal with non-homogeneous problems also in this framework. 
Precisely, we show the compactness Theorem~\ref{thm:main} below (the notation is made precise in Section~\ref{sec:prelim}). The proof of this result is quite simpler, in a sense,
than in~\cite{ChaCri20AnnSNS}, yet also more interesting. It only relies on a suitable version of the
approximate Poincar\'e-Korn inequality of~\cite{CCF16} proven in Theorem~\ref{th:KP}, which asserts that
the energy controls how far a function is to rigid motions
(hence to finite-dimensional), combined with a multiscale construction. We hope that this scheme can be useful for other purposes. We observe for instance that, combined with
the celebrated extension method of Nitsche~\cite{Nitsche81},
a simplified version or this proof allows to easily deduce
 Rellich-type theorems in $BD$~\cite{TemStr,Suquet1981,Tem,KohnTemam}.
\begin{theorem}\label{thm:main}
Let $\Omega\subset \R^d$ be a bounded domain and let $u_k \in GBD(\Omega)$ be such that
\begin{equation}
\sup_{k\in \N} \widehat{\mu}_{u_k}(\Omega)<+\infty.
\end{equation}
Then there exist a subsequence, not relabelled, 
a Caccioppoli partition $\mathcal{P}=(P_n)_n$ of $\Omega$, a sequence of piecewise rigid motions   $(a_k)_k$ with
 \begin{subequations}\label{eqs:0808222034}
\begin{equation}\label{2302202219}
a_k=\sum_{n\in \N} a_k^n \chi_{P_n},
\end{equation}
\begin{equation}\label{2202201909}
|a_k^n(x)-a_k^{n'}(x)| \to +\infty \quad \text{for }\Ln\text{-a.e.\ }x\in \Omega, \text{ for all }n\neq {n'},
\end{equation}
\end{subequations}
 and $u\in GBD(\Omega)$ such that 
\begin{subequations}\label{eqs:0203200917}
\begin{align}
u_k-a_k &\to u \quad \Ln\text{-a.e.\ in }\Omega, \label{2202201910}\\
\hn(\partial^* \mathcal{P} \cap \Omega)&\leq \lim_{\sigma\to +\infty}\liminf_{k\to \infty} \,\hn(J^\sigma_{u_k}).\label{eq:sciSaltoInf}
\end{align}
\end{subequations}
\end{theorem}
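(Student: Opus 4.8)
\emph{Proof proposal.} The plan is to reduce the statement, by a dyadic multiscale procedure, to repeated applications of the approximate Poincar\'e--Korn inequality of Theorem~\ref{th:KP}: at each scale this provides a local infinitesimal rigid motion that the energy forces $u_k$ to be close to, off a small exceptional set, and the partition $\mathcal{P}$ is read off from the pattern of scales at which these local rigid motions fail to match. Concretely, fix a cube $Q_0\supset\supset\Omega$ and for $j\in\N$ let $\mathcal{Q}_j$ be its dyadic subcubes of side $2^{-j}\ell(Q_0)$. For every $j$, every $Q\in\mathcal{Q}_j$ meeting $\Omega$, and every $k$, apply Theorem~\ref{th:KP} to $u_k$ on $Q$: one obtains an infinitesimal rigid motion $a_{k,Q}$ and an exceptional set $\omega_{k,Q}\subset Q$ with $\hn(\partial^*\omega_{k,Q})$ controlled by $\widehat\mu_{u_k}(Q)$ — in fact by the large-jump part captured by $\hn(J^\sigma_{u_k}\cap Q)$ after truncating at level $\sigma$ — and with $\Ln(\omega_{k,Q})$ correspondingly small, such that a truncated norm $\int_{(Q\cap\Omega)\sm\omega_{k,Q}}(1\wedge|u_k-a_{k,Q}|)\dx$ is small compared to $\widehat\mu_{u_k}(Q)$ and $\ell(Q)$. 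Summing over $Q\in\mathcal{Q}_j$ and using subadditivity of $\widehat\mu_{u_k}$ together with $\sup_k\widehat\mu_{u_k}(\Omega)<+\infty$, the total measure of the exceptional sets at scale $j$ tends to $0$ as $j\to\infty$, and a diagonal choice $j=j_k\to\infty$ makes $\Ln\big(\bigcup_{Q\in\mathcal{Q}_{j_k}}\omega_{k,Q}\big)\to 0$.

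\textbf{Step 2 (propagation and the interface).} For fixed $k$ and $j$, form the graph whose vertices are the cubes of $\mathcal{Q}_j$ meeting $\Omega$, joining two adjacent cubes $Q,Q'$ when $|a_{k,Q}-a_{k,Q'}|$ stays below a fixed threshold on $Q\cup Q'$; chaining the Theorem~\ref{th:KP} estimates along connected paths shows that within a connected component $C$ the local rigid motions remain mutually comparable, so $C$ carries a single representative rigid motion $b_{k,C}$ to which $u_k$ is close off $\bigcup_{Q\in C}\omega_{k,Q}$. The faces separating distinct components form a polyhedral set $\Gamma^j_k$, and each such face forces $\hn(J^\sigma_{u_k})$ to have definite mass nearby, which yields $\hn(\Gamma^j_k)\le \hn(J^\sigma_{u_k})+o_\sigma(1)+o_j(1)$ with errors uniform in $k$. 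Extracting a subsequence so that for each $j$ the finitely many components and interfaces stabilize, then letting $j\to\infty$ and $\sigma\to\infty$, produces a rectifiable set $\Gamma$ and the Caccioppoli partition $\mathcal{P}=(P_n)_n$ of $\Omega\sm\Gamma$ with $\hn(\partial^*\mathcal{P}\cap\Omega)\le\hn(\Gamma)\le\lim_{\sigma\to+\infty}\liminf_k\hn(J^\sigma_{u_k})$, that is, \eqref{eq:sciSaltoInf}.

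\textbf{Step 3 (subtracting rigid motions and the a.e.\ limit).} On each $P_n$ the representatives $b_{k,C}$ of components eventually contained in $P_n$ are at bounded distance from one another; using compactness of the finite-dimensional space of infinitesimal rigid motions on cubes modulo the translation part (which is allowed to run to infinity), fix one of them as $a_k^n$ and set $a_k=\sum_n a_k^n\chi_{P_n}$, so that \eqref{2302202219} holds. Having grouped into a single $P_n$ exactly those components whose representatives stay at bounded distance, one gets \eqref{2202201909}. On each $P_n$ the sequence $v_k:=u_k-a_k$ has uniformly bounded $\widehat\mu$-energy (infinitesimal rigid motions have vanishing symmetrized gradient and do not alter the jump set) and, by Steps 1--2, is tight, i.e.\ $\Ln(\{|v_k|\ge\lambda\}\cap P_n)\to 0$ as $\lambda\to+\infty$ uniformly in $k$; hence the compactness theorem in $GBD$ of~\cite{DM13} (as used in~\cite{CC-JEMS}) gives, along a further subsequence, $v_k\to u^n$ in measure on $P_n$, hence $\Ln$-a.e. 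Gluing, $u:=\sum_n u^n\chi_{P_n}\in GBD(\Omega)$ and $u_k-a_k\to u$ $\Ln$-a.e.\ in $\Omega$, which is \eqref{2202201910}.

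\textbf{Main obstacle.} The technical heart is Step 2 together with the bookkeeping tying it to Step 1: one must run the multiscale/chaining construction so that the interface surface is bounded \emph{without a dimensional constant} and so that the limiting set $\Gamma$ is exactly comparable with the large-jump sets $J^\sigma_{u_k}$, which is what makes the sharp inequality \eqref{eq:sciSaltoInf} possible. Equivalently, the construction must neither create spurious interfaces (which would violate \eqref{eq:sciSaltoInf}) nor miss genuine ones (which would spoil the a.e.\ convergence \eqref{2202201910}), and one must interchange carefully the three limits — over the scale $j$, the truncation $\sigma$, and the index $k$ — while checking that $\Omega\sm\Gamma$ has countably many measure-theoretic components forming a genuine Caccioppoli partition and that the subtraction of $a_k$ truly removes the escape to infinity. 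The remaining ingredients — the slicing estimates behind Theorem~\ref{th:KP}, the $GBD$ closure used in Step 3, and the diagonal extractions — are by now standard.
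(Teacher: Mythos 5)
The overall skeleton (dyadic multiscale decomposition combined with the approximate Poincar\'e--Korn inequality of Theorem~\ref{th:KP}) coincides with the paper's, but two of your central steps have genuine gaps, and a third is asserted where it needs to be proved.

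The main obstacle is your claim in Step~2 that the polyhedral interface satisfies $\hn(\Gamma^j_k)\le\hn(J^\sigma_{u_k})+o_\sigma(1)+o_j(1)$ with constant~$1$. There is no justification for this, and it is in fact implausible: if two adjacent \emph{good} cubes at scale $j$ carry incomparable rigid motions, the responsible part of $J^\sigma_{u_k}$ can sit anywhere inside those cubes, can be oblique, and can have measure as small as $\eta\delta_j^{d-1}$ (the ``good'' threshold), while the separating face has area $\delta_j^{d-1}$. A face-by-face comparison can at best produce a bound of the form $\hn(\Gamma^j_k)\le (c(d)/\eta)\,\hn(J^\sigma_{u_k})$ with a dimensional constant, which is not enough for \eqref{eq:sciSaltoInf}. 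The paper obtains the sharp constant $1$ by a completely different mechanism: it does not measure the interface through the cube faces at all, but instead slices in direction $\xi$ and shows (Section~\ref{sec:lsc}) that on $\hn$-a.e.\ line the trace $\partial\mathcal{P}\xy$ is contained in the cluster set of $J^\sigma_{(\widehat u_j)\xy}$, then integrates with Fatou over $y\in\Pi^\xi$ and $\xi\in\Sn$. Replacing your Step~2 interface estimate with that slicing argument is not optional; it is what makes \eqref{eq:sciSaltoInf} true.

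Two further issues. (i) The chaining you propose is between \emph{adjacent} cubes at the same scale, whereas the comparison that actually works in the paper is between \emph{nested} cubes across consecutive scales: if $Q\in\Good^{\delta_j}$ and $Q'\in\Good^{\delta_{j+1}}$ with $Q'\subset Q$, the estimate~\eqref{eq:rigid} holds on both, and since $|\omega_k^Q\cup\omega_k^{Q'}|\le|Q'|/2$ (for $\eta$ small), the two rigid motions are close on a set of positive measure. Adjacent same-scale cubes have no such overlap, so the chaining estimate you invoke does not come for free; and it is precisely the nested structure that gives the monotone family $P_n^j\subset P_n^{j'}$ and hence a well-defined limit Caccioppoli partition. (ii) In Step~3 you assert that $v_k=u_k-a_k$ is tight on each $P_n$ ``by Steps~1--2'' and then cite the GBD compactness of~\cite{DM13}. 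This begs the question: ruling out escape to infinity after the subtraction is exactly what has to be proved, and the paper does it via the truncations $t_\sigma(x)=\sigma\tanh(x/\sigma)$ and the auxiliary finite-dimensional sequences $w_k^j$, showing first that $t_\sigma(e\cdot(u_k-v_k))$ is Cauchy in $L^1(\Omega)$ and then that the ``infinity'' set $A=\{|u^{e,1}|=1\}$ is null; tightness is a consequence, not an input. (Minor: Theorem~\ref{th:KP} controls $|\omega|$, not $\hn(\partial^*\omega)$ as your Step~1 states.)
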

If in addition $(u_k)_k$ is bounded in $GSBD^p(\Omega)$, $p>1$
(that is, \eqref{eq:bdGSBDp} below holds), following~\cite{ChaCri20AnnSNS} one
obtains in addition to the last estimate:
\addtocounter{equation}{-1}
\begin{subequations}
  \begin{align}\label{eq:sciSaltoInfp}\addtocounter{equation}{2}
  \hn((\partial^*\mathcal{P}\cup J_u)\cap\Omega)
  \le \liminf_{k\to \infty} \,\hn(J_{u_k}),
  \end{align}
\end{subequations}
see Remark~\ref{rem:GSBDp} in Section~\ref{sec:lsc}. 


The plan of the note is as follows: we first define properly the notions which
are useful for this work (Sec.~\ref{sec:prelim}). Then, in Section~\ref{sec:rigidity}
we show that a partial rigidity
result of~\cite{CCF16} is also valid in $GBD$, without further integrability
assumption. The following section is devoted to the proof of Theorem~\ref{thm:main}.
Thanks to the rigidity result, we build an appropriate Caccioppoli
partition which will satisfy the thesis of the Theorem in Section~\ref{subsec:partition}.
We end up proving the compactness~\eqref{2202201910}
(Sec.~\ref{sec:compact}) and the lower-semicontinuity~\eqref{eq:sciSaltoInf} (Sec.~\ref{sec:lsc}).

\section{Preliminaries}

\subsection{Notation}\label{sec:prelim}
Given $\Omega \subset \Rn$ open, we   use the notation  
$L^0(\Omega;\R^m)$  for  the space of $\Ln$-measurable functions $v \colon \Omega \to \R^m$, endowed with the topology of convergence in measure.
For any locally compact subset $B  \subset \Rn$, (i.e.\ any point in $B$ has a neighborhood contained in a compact subset of $B$),
the space of bounded $\R^m$-valued Radon measures on $B$ [respectively, the space of $\R^m$-valued Radon measures on $B$] is denoted by $\mathcal{M}_b(B;\R^m)$ [resp., by $\mathcal{M}(B;\R^m)$]. If $m=1$, we write $\mathcal{M}_b(B)$ for $\mathcal{M}_b(B;\R)$, $\mathcal{M}(B)$ for $\mathcal{M}(B;\R)$, and $\mathcal{M}^+_b(B)$ for the subspace of positive measures of $\mathcal{M}_b(B)$. For every $\mu \in \mathcal{M}_b(B;\R^m)$, its total variation is denoted by $|\mu|(B)$.

 We say that $v\in L^1(\Omega)$ is a \emph{function of bounded variation} on $\Omega$, and we write $v\in BV(\Omega)$, if $\mathrm{D}_i v\in \mathcal{M}_b(\Omega)$ for  $i=1,\dots,n$,  where $\mathrm{D}v=(\mathrm{D}_1 v,\dots, \mathrm{D}_n v)$ is its distributional  derivative.  A vector-valued function $v\colon \Omega\to \R^m$ is in $BV(\Omega;\R^m)$ if $v_j\in BV(\Omega)$ for every $j=1,\dots, m$.
The space $BV_{\mathrm{loc}}(\Omega)$ is the space of $v\in L^1_{\mathrm{loc}}(\Omega)$ such that $\mathrm{D}_i v\in \mathcal{M}(\Omega)$ for $i=1,\dots,d$. 

We call \emph{infinitesimal rigid motion} any affine function with skew-symmetric gradient and \emph{piecewise rigid motion} any function of the form $\sum_{j\in \N} a_j \chi_{P_j}$, where $(P_j)_j$ is a Caccioppoli partition of $\Omega$ (that
is, a partition into sets of finite perimeters, with finite total perimeter)
and any $a_j$ is an infinitesimal rigid motion.

Fixed $\xi \in \Sn$, we let
\begin{equation}\label{eq: vxiy2}
\Pi^\xi:=\{y\in \Rn\colon y\cdot \xi=0\},\qquad B^\xi_y:=\{t\in \R\colon y+t\xi \in B\} \ \ \ \text{ for any $y\in \Rn$ and $B\subset \Rn$},
\end{equation}
and for every function $v\colon B\to  \Rn$ and $t\in B^\xi_y$, let
\begin{equation}\label{eq: vxiy}
v^\xi_y(t):=v(y+t\xi),\qquad \widehat{v}^\xi_y(t):=v^\xi_y(t)\cdot \xi.
\end{equation}
Moreover, let $\Pi^\xi(x):=x-(x\cdot \xi) \xi \in \xi^\perp=\{y\in \Rn \colon y\cdot \xi=0\}$ for every $x\in\Rn$.

\begin{definition}[``$GBD$'' \cite{DM13}]\label{def:GBD}
Let $\Omega\subset \Rn$ be a  bounded open set, and let  $v \in L^0(\Omega;\Rn)$.   Then $v\in GBD(\Omega)$ if there exists $\lambda_v\in \mathcal{M}^+_b(\Omega)$ such that  one of the following equivalent conditions holds true 
 for every 
$\xi \in \Sn$: 
\begin{itemize}
\item[(a)] for every $\tau \in C^1(\R)$ with $-\tfrac{1}{2}\leq \tau \leq \tfrac{1}{2}$ and $0\leq \tau'\leq 1$, the partial derivative $\mathrm{D}_\xi\big(\tau(v\cdot \xi)\big)=\mathrm{D}\big(\tau(v\cdot \xi)\big)\cdot \xi$ belongs to $\mathcal{M}_b(\Omega)$, and for every Borel set $B\subset \Omega$ 
\begin{equation*}
\big|\mathrm{D}_\xi\big(\tau(v\cdot \xi)\big)\big|(B)\leq \lambda_v(B);
\end{equation*}
\item[(b)] $\widehat{v}^\xi_y \in BV_{\mathrm{loc}}(\Omega^\xi_y)$ for $\hn$-a.e.\ $y\in \Pi^\xi$, and for every Borel set $B\subset \Omega$ 
\begin{equation*}
\int_{\Pi^\xi} \Big(\big|\mathrm{D} {\widehat{v}}_y^\xi\big|\big(B^\xi_y\setminus J^1_{{\widehat{v}}^\xi_y}\big)+ \mathcal{H}^0\big(B^\xi_y\cap J^1_{{\widehat{v}}^\xi_y}\big)\Big)\dh(y)\leq \lambda_v(B),
\end{equation*}
where
$J^1_{{\widehat{u}}^\xi_y}:=\left\{t\in J_{{\widehat{u}}^\xi_y} : |[{\widehat{u}}_y^\xi]|(t) \geq 1\right\}$.
\end{itemize} 
The function $v$ belongs to $GSBD(\Omega)$ if $v\in GBD(\Omega)$ and $\widehat{v}^\xi_y \in SBV_{\mathrm{loc}}(\Omega^\xi_y)$ for 
every
$\xi \in \Sn$ and for $\hn$-a.e.\ $y\in \Pi^\xi$.
\end{definition} 

For $v\in GBD(\Omega)$, denoting by
\begin{equation}\label{1707221145}
(\widehat{\mu}_v)\xy (B):= |\mathrm{D} {\widehat{v}}_y^\xi\big|\big(B\setminus J^1_{{\widehat{v}}^\xi_y}\big)+ \mathcal{H}^0\big(B\cap J^1_{{\widehat{v}}^\xi_y}\big)\quad\text{for every $B\subset \Omega\xy$ Borel}
\end{equation}
($(\widehat{\mu}_v)\xy \in \Mb^+(\Omega\xy)$ for every $\xi \in \Sn$ and $\hn$-a.e.\ $y\in \Pi_\xi$) and by
\begin{equation}\label{1707221151}
(\widehat{\mu}_v)^\xi(B):=\int_{\Pi^\xi} (\widehat{\mu}_v)\xy(B\xy)\dh(y) \quad\text{for every $B\subset \Omega$ Borel},
\end{equation}
it holds that $(\widehat{\mu}_v)^\xi\in \Mb^+(\Omega)$, $(\widehat{\mu}_v)^\xi\leq \lambda_v$ for any $\lambda_v$ satisfying condition (b) of Definition~\ref{def:GBD} and that
\begin{equation}\label{1707221155}
\widehat{\mu}_v(B):=\sup_k \sup \Big\{ \sum_{i=1}^k (\widehat{\mu}_v)^{\xi_i}(B_i) \colon (\xi_i)_i\subset \Sn, \, B_1, \dots, B_k \subset B, B_i \cap B_j=\emptyset \, \forall i \neq j  \Big\}
\end{equation}
is the smallest measure $\lambda_v$ that satisfies condition (b) of Definition~\ref{def:GBD}.

Every $v\in GBD(\Omega)$ has an \emph{approximate symmetric gradient} $e(v)\in L^1(\Omega;\Mnn)$ such that for every $\xi \in \Sn$ and $\hn$-a.e.\ $y\in\Pi^\xi$ there holds
\begin{equation}\label{3105171927}
 e(v)(y +  t\xi)   \xi   \cdot \xi= (\widehat{v}^\xi_y)'(t)  \quad\text{for } \mathcal{L}^1\text{-a.e.\ }   t \in  \Omega^\xi_y;
\end{equation}
the \emph{approximate jump set} $J_v$ is still countably $(\hn,d{-}1)$-rectifiable (\textit{cf.}~\cite[Theorem~6.2]{DM13} and \cite{DelNin}) and may be reconstructed from its slices through the identity
\begin{equation}\label{slicingSaltoGSBD}
(J^\xi_v)^\xi_y= J_{\widehat{v}^\xi_y} \quad\text{and}\quad v^\pm(y+t\xi)  \cdot \xi = (\widehat{v}^\xi_y)^\pm(t) \ \text{ for }t\in (J_v)^\xi_y,
\end{equation}
where $J^\xi_v:=\{x\in J_v \colon [v]\cdot \xi \neq 0\}$ (it holds that $\hn(J_v\sm J_v^\xi)=0$ for $\hn$-a.e.\ $\xi \in \Sn$).
For every $\sigma>0$ we also denote
\begin{equation}\label{def:Jvsigma}
J_v^\sigma:=\{ x\in J_v \colon |[v]|<\sigma\}.
\end{equation}
By \eqref{slicingSaltoGSBD}, for every $\sigma>0$, every $\xi \in \Sn$, and $\hn$-a.e.\ $y\in \Pi_\xi$
\begin{equation}\label{1807221039}
J_{\widehat{v}^\xi_y}^\sigma \subset (J_v^\sigma)\xy,
\end{equation}
where $J_{\widehat{v}^\xi_y}^\sigma=\{ t\in J_{\widehat{v}^\xi_y}\colon |[\widehat{v}^\xi_y]|<\sigma\}$.

We recall from \cite{ChaCri20AnnSNS} the following lemma on piecewise rigid motions.
 \begin{lemma}\label{le:2502200924}
Let  $(\mathcal{P}_j)_j$ be a Caccioppoli partition  and  let $(a_h)_h$ be a sequence of  piecewise  rigid motions such that \eqref{2302202219} and \eqref{2202201909} hold. Then 
for $\hn$-a.e.\ $\xi \in \Sn$
\begin{equation}\label{2502200925}
|(a_h^j-a_h^i)(x)\cdot \xi| \to +\infty \quad\text{as }h\to +\infty \quad \text{for }\Ln\text{-a.e.\ }x\in \Omega, \text{ for all }i\neq j.
\end{equation}
\end{lemma}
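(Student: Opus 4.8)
The plan is to reduce to a single pair of indices and to exploit that the infinitesimal rigid motions form a finite-dimensional space. Since there are only countably many pairs, it is enough to fix $i\neq j$, set $b_h:=a_h^j-a_h^i$ — which is again an infinitesimal rigid motion — prove the convergence in \eqref{2502200925} for this pair, and then intersect over all pairs the resulting $\hn$-full-measure sets of $\xi$. By \eqref{2202201909}, $|b_h(x)|\to+\infty$ for $\Ln$-a.e.\ $x\in\Omega$. Write $b_h(x)=A_hx+c_h$ with $A_h$ skew-symmetric, and let $\mathcal R$ be the finite-dimensional space of infinitesimal rigid motions, endowed with a norm $\|\cdot\|$.

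The first step is to note that $\|b_h\|\to+\infty$: otherwise some subsequence is bounded in $\mathcal R$ and, by finite-dimensionality, converges (up to a further subsequence) to some $b_\infty\in\mathcal R$, so that $b_h(x)\to b_\infty(x)$ for \emph{every} $x$, contradicting $|b_h(x)|\to+\infty$ on the positive-measure set $\Omega$. Put $\widehat b_h:=b_h/\|b_h\|$; by compactness of the unit sphere of $\mathcal R$ we may pass to a (not relabelled) subsequence along which $\widehat b_h\to\widehat b$ in $\mathcal R$, with $\widehat b\neq0$, and in particular $\widehat b_h(x)\cdot\xi\to\widehat b(x)\cdot\xi$ locally uniformly in $(x,\xi)$. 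The algebraic heart of the matter is that a nonzero infinitesimal rigid motion $\widehat b(x)=\widehat Ax+\widehat c$ produces a polynomial $p(x,\xi):=\widehat b(x)\cdot\xi=x\cdot(\widehat A^{\mathsf T}\xi)+\widehat c\cdot\xi$ which does not vanish identically on $\Omega\times\Sn$: its coefficients, viewed as a polynomial in $x$, are the \emph{linear} forms $\xi\mapsto(\widehat A^{\mathsf T}\xi)_m$ and $\xi\mapsto\widehat c\cdot\xi$, at least one of which is a nonzero linear form since $(\widehat A,\widehat c)\neq0$, and a nonzero linear form vanishes on an $\hn$-null subset of $\Sn$. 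Hence for $\hn$-a.e.\ $\xi$ the function $p(\cdot,\xi)$ is a nonzero affine function of $x$, so $\{x:p(x,\xi)=0\}$ is $\Ln$-null, and by Fubini $\{(x,\xi)\in\Omega\times\Sn:p(x,\xi)=0\}$ is $(\Ln\otimes\hn)$-null. Consequently, for $(\Ln\otimes\hn)$-a.e.\ $(x,\xi)$,
\[
 |b_h(x)\cdot\xi|=\|b_h\|\,\bigl|\widehat b_h(x)\cdot\xi\bigr|\ \longrightarrow\ +\infty,
\]
because $\|b_h\|\to+\infty$ while $\widehat b_h(x)\cdot\xi\to\widehat b(x)\cdot\xi\neq0$. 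One more application of Fubini gives: for $\hn$-a.e.\ $\xi$, $|b_h(x)\cdot\xi|\to+\infty$ for $\Ln$-a.e.\ $x$; intersecting over the countably many pairs $(i,j)$ yields \eqref{2502200925}.

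The point I expect to require the most care is the control of the \emph{directions} of the $b_h$: a priori the vectors $b_h(x)$ might rotate while $|b_h(x)|\to+\infty$, and one must prevent $b_h(x)\cdot\xi$ from returning arbitrarily close to $0$ on a non-negligible set of $(x,\xi)$. Normalizing by $\|b_h\|$ and using compactness of the unit sphere of the finite-dimensional space $\mathcal R$ reduces this to the situation in which the normalized rigid motions converge; the subsequence this requires is harmless here, since Theorem~\ref{thm:main} is proved along a subsequence anyway, into which this extraction can be absorbed.
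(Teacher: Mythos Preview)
The paper does not supply a proof here; the lemma is recalled from~\cite{ChaCri20AnnSNS}, so your argument must be judged on its own.

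Your reduction to a single pair, the observation that $\|b_h\|\to\infty$, and the finite-dimensional compactness step are all correct, and you have put your finger on the real difficulty: the normalized directions $b_h/\|b_h\|$ may rotate. This is not a mere technicality---the statement for the \emph{full} sequence is in fact false. In $\R^2$, take $A_h=0$ and $c_h=h(\cos\theta_h,\sin\theta_h)$, choosing the $\theta_h$ so that the arcs $\{\phi:|\cos(\theta_h-\phi)|\le 1/h\}$ (each of length $\approx 2/h$) cover $[0,2\pi)$ infinitely often, which is possible since $\sum_h 1/h=+\infty$. Then $|b_h(x)|=h\to\infty$ everywhere, yet for \emph{every} $\xi=(\cos\phi,\sin\phi)$ one has $|b_h(x)\cdot\xi|\le 1$ for infinitely many $h$. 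So your passage to a subsequence is essential rather than cosmetic, and your remark that it can be absorbed into the subsequence of Theorem~\ref{thm:main} is the right resolution for the use made of the lemma in Section~\ref{sec:lsc}.

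One detail you should make explicit: the subsequence you extract depends on the pair $(i,j)$, and there are countably many pairs, so a diagonal extraction is needed to produce a \emph{single} subsequence along which~\eqref{2502200925} holds simultaneously for every pair. Only after this does intersecting the full-$\hn$-measure sets of $\xi$ over all pairs make sense; the further subsequences extracted in Section~\ref{sec:lsc} then inherit~\eqref{2502200925} automatically.
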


\subsection{Rigidity in $GBD$}\label{sec:rigidity}
The following result is obtained in the footsteps of Proposition~2.1 in~\cite{CCF16}. Let
 $Q_\delta = (-\delta/2,\delta/2)^d$.
\begin{theorem}\label{th:KP} There exist $c>0$ such that
  for any $\delta>0$, $u\in GSBD(Q_\delta)$, there
  exists $\omega\subset Q_\delta$ with $|\omega|\le c\delta\H^{d-1}(J_u^1)$
  and an infinitesimal rigid motion $a$
  such that
  \[
    \int_{Q_\delta\setminus\omega} |u-a|\dx \le c\delta \,\widehat{\mu}_u(Q_\delta\setminus J_u^1).
  \]
\end{theorem}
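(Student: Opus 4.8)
The plan is to follow the scheme of Proposition~2.1 in~\cite{CCF16}, but to bypass the $p$-integrability assumption by working directly at the level of one-dimensional slices and the slicing characterization of $GBD$ from Definition~\ref{def:GBD}(b). First I would fix a finite set of directions $\xi_1,\dots,\xi_N\in\Sn$ (for instance the coordinate directions together with enough diagonal ones so that the symmetric matrices $\xi_i\odot\xi_i$ span $\Mnn$, as in~\cite{CCF16}); the number $N$ and the constant $c$ will depend only on $d$. For each such $\xi=\xi_i$ and $\hn$-a.e.\ slicing point $y\in\Pi^\xi$, the one-dimensional section $\widehat u^\xi_y$ lies in $BV_{\mathrm{loc}}(Q_\delta{}^\xi_y)$, its distributional derivative away from the ``large jump'' set $J^1_{\widehat u^\xi_y}$ is a measure of mass $(\widehat\mu_u)^\xi_y(Q_\delta{}^\xi_y\setminus J^1_{\widehat u^\xi_y})$, and by~\eqref{1807221039} the large jumps of the slice sit inside $(J^1_u)^\xi_y$. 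So on the complement of the projection onto $\Pi^\xi$ of a set of $\hn$-measure $\lesssim\H^{d-1}(J^1_u)/\delta$ — namely those $y$ for which $\widehat u^\xi_y$ has no large jump on the segment — the slice $\widehat u^\xi_y$ is a $BV$ function of one variable with derivative a bounded measure, hence differs from a constant (in $L^1$ of the segment) by at most $\delta\,\|\mathrm D\widehat u^\xi_y\|$.

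The core of the argument is then the finite-dimensional rigidity/linear-algebra step of~\cite{CCF16}. Having controlled, for each direction $\xi_i$ and for most slices, the deviation of $\widehat u^{\xi_i}_y$ from an affine-in-$t$ function whose slope is $e(u)(y+t\xi_i)\xi_i\cdot\xi_i$ by~\eqref{3105171927}, one integrates over $\Pi^{\xi_i}$ to obtain that $u\cdot\xi_i$ is close, in $L^1(Q_\delta\setminus\omega_i)$, to a function that is affine along the $\xi_i$-lines; the bad set $\omega_i$ has measure $\lesssim\delta\,\H^{d-1}(J^1_u)$ and, crucially, the $L^1$-defect is bounded by $\delta$ times the mass of the ``diffuse'' part $\widehat\mu_u(Q_\delta\setminus J^1_u)$ measured in the $\xi_i$-direction, which by~\eqref{1707221155} is dominated by $\widehat\mu_u(Q_\delta\setminus J^1_u)$. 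Setting $\omega=\bigcup_i\omega_i$ and combining the $N$ scalar estimates, the same elementary argument as in~\cite{CCF16} — expressing a general rigid motion in terms of the data $\{u\cdot\xi_i\ \text{affine along }\xi_i\}$, and using that the relevant finite-dimensional linear map is invertible with a dimensional bound — produces a single infinitesimal rigid motion $a$ with $\int_{Q_\delta\setminus\omega}|u-a|\,\dx\le c\delta\,\widehat\mu_u(Q_\delta\setminus J^1_u)$. A scaling check (both sides scale like $\delta^{d+1}$ versus $\delta\cdot\delta^{?}$… more precisely the inequality is scale-covariant once one notes $\widehat\mu_u$ and $\H^{d-1}$ behave correctly) fixes the powers of $\delta$ and confirms the constant is independent of $\delta$.

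The main obstacle, and the place where one must be careful compared to the $GSBD^p$ setting, is that one does not have an $L^p$ (or even $L^1$) a priori bound on $u$ itself, only on the measure $\widehat\mu_u$; hence the whole argument must be run with $L^1$-in-$t$ estimates on slices that carry \emph{no} integrability beyond what Poincaré on an interval gives from a bounded-variation derivative, and one cannot invoke any compactness or Sobolev embedding along the way. Concretely, the delicate points are (i) checking that the exceptional set of slices in each direction, where $\widehat u^\xi_y$ \emph{does} have a jump of size $\ge 1$, really has controlled measure — this is exactly the first term $(\widehat\mu_u)^\xi_y\big(\{\text{a large jump}\}\big)\ge 1$ on such slices, so their $\Pi^\xi$-measure is $\le\widehat\mu_u(Q_\delta)$, but we need the sharper localization to $J^1_u$ and the factor $\delta$, which comes from Fubini together with the segment having length $\delta$; and (ii) reconstructing a genuinely affine (not merely ``affine along each chosen line'') vector field $a$ — i.e.\ verifying the compatibility/rigidity that forces the slopes gathered from the $N$ directions to assemble into a skew-symmetric gradient — which is where the choice of directions spanning $\Mnn$ and the quantitative linear algebra of~\cite{CCF16} is used verbatim. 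Everything else is a routine slicing-and-Fubini bookkeeping of the type already present in~\cite{DM13}.
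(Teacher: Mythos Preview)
Your proposal departs from the paper's argument at the decisive step, and the gap there is real. The paper, following \cite{CCF16} closely, does \emph{not} fix a finite family of directions and then combine. It selects (by a layered good-set argument in the basepoint $z_0$) a simplex with vertices $z_i=z_0+t_*e_i$, defines the affine map $a$ by interpolating the \emph{pointwise values} $u(z_0),\dots,u(z_d)$, and then bounds $|u(y)-a(y)|$ for $y\notin\omega$ via the segments $[z_i,y]$, whose directions $\xi_{i,y}=(y-z_i)/|y-z_i|$ \emph{vary with $y$}; see \eqref{analoga215}. The ``finite-dimensional linear map'' in \cite{CCF16} is the interpolation from $d{+}1$ vertex values to an affine function, not a map acting on per-direction slice data. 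The only genuine change relative to \cite{CCF16} is replacing $J_u$ by $J_u^1$ and the $L^p$ norm of $e(u)$ by the measure $\widehat\mu_u(Q\setminus J_u^1)$, carried through the Fubini computations \eqref{1707221755} and \eqref{1807221143}.

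In your scheme, the one-dimensional Poincar\'e on each slice yields, for each fixed $\xi_i$, only that $u(x)\cdot\xi_i$ is close off $\omega_i$ to a function $m_i(\Pi^{\xi_i}x)$ which is \emph{constant} along $\xi_i$-lines (not ``affine in $t$'': the slice derivative is a general measure, not a constant), with $m_i$ an arbitrary measurable function of $d{-}1$ variables. That is infinite-dimensional data, so there is no finite-dimensional inversion to perform; to reach a single rigid motion you would still have to show each $m_i$ is $L^1$-close to an affine function of $y$---which is itself a Korn-type statement of the same strength as the theorem (in the smooth case one differentiates the functional equation $m_1(x_2)+m_2(x_1)\approx m_3(x_1{-}x_2)$; doing this quantitatively with only $\widehat\mu_u$-control is exactly the hard part). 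You are right that the lack of any $L^1$ bound on $u$ is the central obstacle, but your remedy does not resolve it: without integrability of $u$ one cannot even define a candidate $a$ by averaging or projection onto rigid motions, whereas pointwise interpolation at well-chosen simplex vertices sidesteps precisely this.
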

\begin{proof}
We sketch the proof, highlighting the modifications with respect to \cite[Proposition~2.1]{CCF16}.

As in \cite{CCF16}, we may assume by a rescaling argument $\delta=1$ (and write $Q$ for $Q_1$), and that $\hn(J_u^1)\leq \frac{1}{32 d^3}$, otherwise it is enough to take $\omega=Q$, $a=0$, $c=32 d^3$.
We define the function $T\colon \Rn \times \Sn \times \R \to \R$ by
\begin{equation}\label{analoga26}
T(x,\xi, t):=
\begin{dcases}
1 \quad &\text{if }x\in Q, \, x+t\xi \in Q \text{ and } x+[0,t] \xi \cap J^1_u\neq\emptyset,\\
0 \quad & \text{otherwise}.
\end{dcases}
\end{equation}
By definition of distributional derivative it holds that
\begin{equation}\label{analoga25}
\xi \cdot \big(u(x+t\xi)-  u(x) \big)= \int_{x\cdot \xi}^{x\cdot \xi + t}\mathrm{D} \widehat{u}\xy(s) \,\mathrm{d}s , \quad y:= \Pi^\xi(x)
\end{equation}
and $\mathrm{D} \widehat{u}\xy \leq (\widehat{\mu}_u)\xy$ on $[x\cdot \xi, x\cdot \xi + t]$ if $T(x,\xi, t)=0$ and $x,\,x+t\xi \in Q$ (recall \eqref{1807221039})
at least for a.e.~$x\in Q$ and $t\in\R$.
We remark that \eqref{analoga26} is the analogue of \cite[definition (2.6)]{CCF16} when replacing $J_u$ with $J_u^1$, and that \eqref{analoga25} is the analogue of \cite[equation (2.5)]{CCF16}.
Therefore, exactly as in \cite{CCF16}, one obtains that
\begin{equation*}
\int_{\Rn}\int_{\R} T(x,\xi, t) \,\mathrm{d}t\dx \leq 4d \hn(J_u^1) \quad \text{for any }\xi \in \Sn
\end{equation*}
and that there exists $t_* \in (1/2, 1)$ (fixed for the rest of the proof) and $q_1 \subset q$ with $\Ln(q_1)/\Ln(q)>3/4$ such that, defining
\begin{subequations}\label{eqs:analoga28}
\begin{equation}\label{1707221737}
z_i(z_0):=z_0+ t_* e_i \quad \text{for all }i=1,\dots, d, \qquad E_{z_0}:=\Big(\bigcup_{i=1}^d [z_0, z_i(z_0)] \cup \bigcup_{1\leq i < j \leq d} [z_i(z_0), z_j(z_0)] \Big)
\end{equation}
it holds
\begin{equation}\label{analoga28}
\text{for $z_0 \in q_1$: }  E_{z_0} \cap J_u^1 = \emptyset, \, E_{z_0}\subset Q.
\end{equation}
\end{subequations}

For $t_*$ fixed above and any $z_0\in q$ let us consider
\begin{equation*}
\begin{split}
F(z_0)&:=\sum_{0\leq i < j\leq n} |\mathrm{D} \widehat{u}^{\xi_{i,j}}_{y_{i,j}(z_0)}|([z_i(z_0) \cdot \xi_{i,j}, z_j(z_0)\cdot \xi_{i,j}]), \\
& \xi_{i,j}:=\frac{z_i(z_0)-z_j(z_0)}{|z_i(z_0)-z_j(z_0)|},\, y_{i,j}(z_0):=\Pi^{\xi_{i,j}}(z_i(z_0))=z_i(z_0)-(z_i(z_0) \cdot \xi_{i,j}) \xi_{i,j}.
\end{split}
\end{equation*}
We notice that $\xi_{i,j}=\frac{e_i-e_j}{\sqrt{2}}$ if $i \neq 0$, while $\xi_{i,j}=e_j$ if $i=0$.
Fixed $i \neq j$, we integrate for $z_0\in q$ as $\widetilde{z}_0=\Pi^{\xi_{i,j}}(z_0)$ ranges in $\xi_{i,j}^\perp$ and $z'_0=z_0\cdot \xi_{i,j}$ ranges in $q^{\xi_{i,j}}_{\Pi^{\xi_{i,j}}(z_0)}$, using Fubini's Theorem. 
Moreover, if $\Pi^{\xi_{i,j}}(z_0)$ is fixed to a value $\widetilde{z}_0\in \Pi^{\xi_i,j}$, also $y_{i,j}(z_0)$ is fixed and equal to 
\[
\widehat{z}_0:= \Pi^{\xi_{i,j}}(z_0)+t_* \Pi^{\xi_{i,j}}(e_i)=\widetilde{z}_0+t_* \Pi^{\xi_{i,j}}(e_i),
\]
 so that in such a case
$[z_i(z_0) \cdot \xi_{i,j}, z_j(z_0)\cdot \xi_{i,j}]\subset  \widehat{z}_0 + \R \xi_{i,j} \cap Q$
and 
\[
|\mathrm{D}\widehat{u}^{\xi_{i,j}}_{y_{i,j}(z_0)}|([z_i(z_0) \cdot \xi_{i,j}, z_j(z_0)\cdot \xi_{i,j}]) \leq (\widehat{\mu}_u)^{\xi_{i,j}}_{\widehat{z}_0}((Q\sm J_u^1)^{\xi_{i,j}}_{\widehat{z}_0})
\]
 regardless of the value of $z'_0=z_0\cdot \xi_{i,j}$ (satisfying $\widetilde{z}_0+z'_0 \xi_{i,j}\in q$ since we integrate over $z_0\in q$). It follows that (notice that $\mathcal{L}^1(\{s\in \R\colon\widetilde{z}_0+z'_0 \xi_{i,j}\in Q\}) \leq \sqrt{2}$)
\begin{equation}\label{1707221755}
\begin{split}
  \int_q |\mathrm{D} \widehat{u}^{\xi_{i,j}}_{y_{i,j}(z_0)}|([z_i(z_0) \cdot \xi_{i,j}, & z_j(z_0)\cdot \xi_{i,j}]) \,\mathrm{d}z_0
  \\
  &= \hspace{-1em}\int\limits_{\widetilde{z}_0=\Pi^{\xi_{i,j}}_{z_0}\in \xi_{i,j}^\perp} \hspace{-2em} \dh(\widetilde{z}_0) \hspace{-2em}\int\limits_{(\widetilde{z}_0+\R \xi_{i,j})\cap q}  \hspace{-2em}  |\mathrm{D}   \widehat{u}^{\xi_{i,j}}_{y_{i,j}(z_0)}|([z_i(z_0) \cdot \xi_{i,j}, z_j(z_0)\cdot \xi_{i,j}]) \, \mathrm{d}z'_0 \\
& \leq \sqrt{2}  \int\limits_{\widetilde{z}_0\in \xi_{i,j}^\perp}   (\widehat{\mu}_u)^{\xi_{i,j}}_{\widehat{z}_0}((Q\sm J_u^1)^{\xi_{i,j}}_{\widehat{z}_0})\dh(\widetilde{z}_0)\\
& \leq \sqrt{2} (\widehat{\mu}_u)^{\xi_{i,j}}(Q\sm J_u^1).
\end{split}
\end{equation}
Summing \eqref{1707221755} over $0\leq i< j\leq d$, we get
\begin{equation*}
\int_q F(z_0) \,\mathrm{d}z_0 \leq \sqrt{2} (d+1)^2 \widehat{\mu}_u(Q\sm J_u^1).
\end{equation*}
and there exists $q_2 \subset q$ with $\Ln(q_2)/\Ln(q)>3/4$ such that for every $z_0\in q_2$
\begin{equation}\label{analoga29}
F(z_0) \leq 4 \sqrt{2} (d+1)^2 \widehat{\mu}_u(Q\sm J_u^1).
\end{equation}
This is the analogue of \cite[condition (2.8)]{CCF16}. 
At this stage, following \cite{CCF16}, it holds that for any $z_0$ satisfying \eqref{analoga28} and \eqref{analoga29} the affine map $a\colon \Rn \to \Rn$ such that $a(z_i(z_0))=u(z_i(z_0))$ for all $i=0, \dots, d$ satisfies
\begin{equation}\label{analoga211}
|e(a)|\leq c \widehat{\mu}_u(Q\sm J_u^1).
\end{equation}
Arguing exactly as in Step~3 of the proof of Proposition~2.1 in \cite{CCF16} we find a set $q_3\subset q$ with $\Ln(q_3)/\Ln(q)>3/4$ such that if $z_0\in q_3$ then
\begin{equation}\label{analoga213}
\begin{split}
&\text{ \eqref{analoga25} holds for any $[z_i(z_0),y]$ for $y \in Q\sm \omega$, where }\\
&\omega=\bigcup_{i=0}^n \omega_{(i)},\qquad \omega_{(i)}:=\{y\in Q \colon y=z_i+t \xi \text{ with }T(z_i, \xi, t)=1\}.
\end{split}
\end{equation}
With \eqref{analoga25} and the fact that for any $y$ there are $d$ points in $\{z_0, \dots, z_d\}$ such that the simplex generated by those and $y$ has volume at least $t_*/(d+1)!$, \eqref{analoga213} implies that
\begin{equation}\label{analoga215}
\begin{split}
|w(y)|\leq & c \sum_{i=0}^n |\mathrm{D} \widehat{w}^{\xi_{i,y}}_{x_{i,y}(z_0)}|([z_i(z_0) \cdot \xi_{i,y}, y\cdot \xi_{i,y}]) \quad \text{for all }y\in Q \sm \omega, \text{ where }\\
&
w:=u-a,\quad \xi_{i,y}:= \frac{y-z_i(z_0)}{|y-z_i(z_0)|},\quad x_{i,y}(z_0):= \Pi^{\xi_{i,y}}(z_i(z_0))
\end{split}
\end{equation}
Let us consider the quantity
\begin{equation}\label{1807221055}
H(z_0):= \sum_{i=0}^n H_i(z_0), \quad H_i(z_0):= \int_{Q\sm \omega} |\mathrm{D} \widehat{w}^{\xi_{i,y}}_{x_{i,y}(z_0)}|([z_i(z_0) \cdot \xi_{i,y}, y\cdot \xi_{i,y}]) \,\mathrm{d}y.
\end{equation}
In the following we prove that there exists $q_4\subset q_3$ with $\Ln(q_4)/\Ln(q_3)>3/4$ such that $H(z_0)$ is controlled by $\widehat{\mu}_u(Q\sm J_u^1)$ times a constant $c$ depending only on $d$, for every $z_0\in q_4$.
Together with \eqref{analoga211} and \eqref{analoga215} this will conclude the proof.

By Fubini's Theorem, we have that for every $i=0, \dots, d$
\begin{equation*}
\int_{q_3} H_i(z_0) \,\mathrm{d}z_0 = \int_{Q\sm \omega} \int_{q_3}  |\mathrm{D} \widehat{w}^{\xi_{i,y}}_{x_{i,y}(z_0)}|([z_i(z_0) \cdot \xi_{i,y}, y\cdot \xi_{i,y}])    \,\mathrm{d}z_0 \,\mathrm{d}y.
\end{equation*}
For fixed $y\in Q\sm \omega$ we argue similarly to what done to prove \eqref{1707221755}, namely we integrate for $z_0\in q_3$ as $\widetilde{z}_0=\Pi^{\xi_{i,y}}(z_0)$ ranges in $\xi_{i,y}^\perp$ and $z'_0=z_0 \cdot \xi_{i,y}$ ranges in $(q_3)_{\widetilde{z}_0}^{\xi_{i,y}}$, using Fubini's Theorem. Then, given $\widetilde{z}_0$, we have that $x_{i,y}(z_0)= \widetilde{z}_0 + t_*\Pi^{\xi_{i,y}}(e_i)=: \widehat{z}_0$ and that
\[
|\mathrm{D} \widehat{w}^{\xi_{i,y}}_{x_{i,y}(z_0)}|([z_i(z_0) \cdot \xi_{i,y}, y\cdot \xi_{i,y}]) \leq (\widehat{\mu}_w)^{\xi_{i,y}}_{\widehat{z}_0}((Q\sm J_u^1)^{\xi_{i,y}}_{\widehat{z}_0}),
\]
regardless of the value of $z'_0$. Therefore
\begin{equation}\label{1807221143}
\begin{split}
\int_{q_3}  |\mathrm{D} \widehat{w}^{\xi_{i,y}}_{x_{i,y}(z_0)}|([z_i(z_0) \cdot \xi_{i,y}, y\cdot \xi_{i,y}])    \,\mathrm{d}z_0 & \leq \sqrt{2}\int_{\xi_{i,y}^\perp}  (\widehat{\mu}_w)^{\xi_{i,y}}_{\widehat{z}_0}((Q\sm J_u^1)^{\xi_{i,y}}_{\widehat{z}_0})    \dh(\widetilde{z}_0)\\
& \leq \sqrt{2} (\widehat{\mu}_w)^{\xi_{i,y}}(Q\sm J_u^1)  =  \sqrt{2} (\widehat{\mu}_u)^{\xi_{i,y}}(Q\sm J_u^1)
\end{split}
\end{equation}
where the equality above follows from the fact that $a$ is an infinitesimal rigid motion.
Summing \eqref{1807221143} over $i$ (and arguing as done for \eqref{analoga29}) we get that
\[
\int_{q_3} H(z_0) \,\mathrm{d}z_0 \leq c \widehat{\mu}_u(Q\sm J_u^1),
\]
thus we find $q_4\subset q_3$ with $\Ln(q_4)/\Ln(q_3)>3/4$ and $H(z_0) \leq c \widehat{\mu}_u(Q\sm J_u^1)$ for every $z_0\in q_4$; 
we conclude the proof by picking $z_0$ in $q_1 \cap q_2 \cap q_4$ (which has positive measure) and integrating \eqref{analoga215} over $y \in Q\sm \omega$.
\end{proof}
\section{Proof of the compactness theorem}
In this section we prove Theorem~\ref{thm:main}. In Subsection~\ref{subsec:partition} we construct a suitable partition $\mathcal{P}=(P_n)_n$ of $\Omega$ and a sequence piecewise rigid functions $(a_k)_k$ satisfying \eqref{eqs:0808222034}; in the next two subsections we prove the existence of $u \in GBD(\Omega)$ satisfying \eqref{2202201910} and the lower semicontinuity condition \eqref{eq:sciSaltoInf} on the surface measure of $\partial^*\mathcal{P}$, 
respectively.
\subsection{Construction of a partition}\label{subsec:partition}

For $\delta>0$, let $\Q^\delta = \{ z+(-\delta/2,\delta/2]^d:
z\in \delta\Z^d, \ z+(-\delta/2,\delta/2)^d\subset\Omega\}$.
Let $\eta>0$, small, and define
\[
  \Bad^\delta(u_k)=\{Q\in \Q^\delta: \H^{d-1}(J^1_{u_k}\cap Q)> \eta\delta^{d-1}\},\quad \Good^\delta(u_k)=\Q^\delta\setminus \Bad^\delta(u_k).
\]
By construction, one has
\begin{equation}\label{eq:estimbad}
  \left|{\textstyle\bigcup_{Q\in\Bad^{\delta}(u_k)} Q}\right|\le \frac{\delta}{\eta}\H^{d-1}(J_{u_k}^1).
\end{equation}

We fix a first value $\delta_0>0$, small enough so that $\Good^{\delta_0}(u_k)\neq\emptyset$ for
all $k\ge 1$, and for $j\ge 0$, denote
$\delta_j=\delta_0 2^{-j}$.
Upon extracting a subsequence, we may assume that $\Bad^{\delta_0}(u_k)$ is
not depending on $k$. By a diagonal argument, we may (and will) assume even
that for any $j\ge 0$, $\Bad^{\delta_j}(u_k)$ does not depend on $k$ if
$k$ is large enough (depending on $j$). We denote then $\Bad^{\delta_j}$
(and $\Good^{\delta_j}$) the corresponding limiting sets, dropping
the dependence in $u_k$.

Thanks to Theorem~\ref{th:KP}, for any $j\ge 0$ and each $Q\in \Q^{\delta_j}$, there is $\omega_k^Q\subset Q$
and $a_k^Q$, an infinitesimal rigid motion, 
such that
\begin{equation}\label{eq:rigid}
  \int_{Q\setminus\omega_k^Q} |u_k-a_k^Q|dx \le
  c\delta_j \widehat{\mu}_{u_k}(Q\setminus J_{u_k}^1)
\end{equation}
and  $|\omega_k^Q|\le c\delta_j\H^{d-1}(J_{u_k}^1\cap Q)$. In particular:
\begin{equation}\label{eq:estimomegakQ}
  Q\in\Good^{\delta_j}(u_k) \quad\Rightarrow\quad |\omega_k^Q|\le c\eta|Q|.
\end{equation}

Considering the finite family of sequences of infinitesimal rigid motions $(a_k^Q-a_k^{Q'})_k$, $\{Q,Q'\}\subset\Good^{\delta_0}$ (which does
not depend on $k$), we may assume, upon extracting
a subsequence, that  either
$|a_k^Q(x)-a_k^{Q'}(x)|\to\infty$ a.e., or $\sup_k\sup_{|x|\le 1} |a_k^Q(x)-a_k^{Q'}(x)|<+\infty$.
By a diagonal argument,
similarly, for $j\ge 1$, considering $(a_k^Q-a_k^{Q'})_k$, $\{Q,Q'\}\in \Good^{\delta_j}(u_k)$, which for $k$ large enough is $\Good^{\delta_j}$,
not depending
on $k$, we may assume the same.

We let for $j\ge 0$
\[B_j = \Big(\Omega\setminus\bigcup_{Q\in\Q^{\delta_j}}Q\Big)  \cup
  \bigcup_{l\ge j} \bigcup_{Q\in \Bad^{\delta_l}} Q.
\]
Thanks to~\eqref{eq:estimbad}, it holds that
\begin{equation}\label{eq:estimBj}
  |B_j|\le \left|\Omega\setminus{\textstyle\bigcup_{Q\in\Q^{\delta_j}}Q}\right|
  +\frac{2\delta_j}{\eta} \sup_k\widehat{\mu}_{u_k}(\Omega),
\end{equation}
so that $\lim_j  |B_j|  =0$; moreover $(B_j)_j$ is decreasing, that is
$B_{j+1}\subseteq B_j$ for all $j\ge 0$.

We define a partition $(P^j_n)_{n=1}^{N_j}$ of
$\Omega\setminus B_j$ as follows:
the sequences $(a_k^Q)$, $Q\in \Good^{\delta_j}$, for $k$ large enough so that $\Good^{\delta_j}(u_k)=\Good^{\delta_j}$,  can be grouped in equivalent classes for
the relationship $a_k^Q\sim a_k^{Q'}$ when $\sup_k\sup_{|x|\le 1} |a_k^Q(x)-a_k^{Q'}(x)|<+\infty$.
Then, we say that $Q\sim Q'$ whenever $a_k^Q\sim a_k^{Q'}$. We define, for each
equivalence class $\mathcal{C}_n$ in $\Good^{\delta_j}$, $n=1,\dots,N_j$, the set
$P^j_n = \bigcup_{Q\in\mathcal{C}_n} Q\setminus B_j$.

Observe that for any $j\ge 1$,
if $Q\in \Good^{\delta_j}$, $Q'\in \Good^{\delta_{j+1}}$ with $Q'\subset Q$ (and
 $k$ is large enough), then one has:
\[
  \int_{Q'\setminus (\omega_k^{Q}\cup \omega_k^{Q'})}|a_k^Q-a_k^{Q'}|dx
  \le c\delta\widehat{\mu}_{u_k}(Q\setminus J_{u_k}^1),
\]
so that, provided $\eta>0$ was chosen small enough
(to ensure for instance that $|\omega_k^{Q}\cup \omega_k^{Q'}|\le |Q'|/2$,
which is guaranteed if $\eta<2^{-d}/(4c)$,
\textit{cf.}~\eqref{eq:estimomegakQ}),
$\sup_k\sup_{|x|\le 1} |a_k^Q(x)-a_k^{Q'}(x)|<+\infty$.

Hence given $Q\in Q^{\delta_l}$, $Q'\in Q^{\delta_{l'}}$,
for $l'\ge l\ge j$, and such that $Q'\subset Q$ and $|Q'\setminus B_j|>0$,
one obtains by induction, $\sup_k\sup_{|x|\le 1} |a_k^Q(x)-a_k^{Q'}(x)|<+\infty$
(as all the intermediate cubes are all ``good'' at their respective scale).

It follows that:
for any smaller scales $l,l'\ge j$, and any
$Q\in \Good^{\delta_l}$, $Q'\in \Good^{\delta_{l'}}$ 
with both $Q\setminus B_j$ and $Q'\setminus B_j$ of positive measure and
contained in the same component $P_n^j$, one finds that
$\sup_k\sup_{|x|\le 1} |a_k^Q(x)-a_k^{Q'}(x)|<+\infty$.
Indeed, there are $\tilde Q,\tilde Q'\in\Good^{\delta_j}$ with $Q\subset \tilde Q$,
$Q'\subset \tilde Q'$ and $\tilde Q \sim \tilde Q'$.

In particular this shows that for $j'\ge j$ and $n\in \{1,\dots, N_j\}$,
there is $n'\in \{1,\dots , N_{j'}\}$ such that $P^j_n \subset P^{j'}_{n'}$.
We may always number the sets $(P^j_n)_n$, $j\ge 1$, according to the numbering
of $(P^{j-1}_n)_n$, so that in fact $P^j_n\subset P^{j'}_n$ for
any $j'\ge j$ and any $n\in \{1,\dots, N_j\}$. As a consequence, we
may define, for $1\le n < 1+ \sup_j N_j\in\mathbb{N}\cup\{+\infty\}$,
the set $P_n = \bigcup_j P^j_n$ (where the union starts at the first
$j$ such that $n\le N_j$).
These sets partition $\Omega\setminus\bigcap_j B_j$, hence, up to a Lebesgue-negligible
set, $\Omega$.

For each $n$, we choose an arbitrary $Q\in \Good^{\delta_j}$, at some arbitrary scale $j\ge 0$,
with $|Q\setminus B_j|>0$ and $Q\setminus B_j\subset P_n$, and we associate to $P_n$ the
subsequence $(a_k^Q)_{k}$, hence denoted $(a_k^n)_k$. It follows that
for any other such cube $Q$ at any other scale $j$, one has
$\sup_k\sup_{|x|\le 1} |a_k^n(x)-a_k^{Q}(x)|<+\infty$,
while $\lim_k |a_k^{n'}(x)-a_k^{Q}(x)|=+\infty$ a.e.~if $n'\neq n$.

\subsection{Compactness}\label{sec:compact}

We introduce the smooth,
one-Lipschitz truncations $t_\sigma(x):= \sigma\tanh(x/\sigma)$, for $\sigma>0$.
We also let $v_k = \sum_n  a_k^n\chi_{P_n}$.
Note that at the scale $j\ge 0$, one has that
\[
  v_k|_{\Omega\setminus B_j} = \sum_n  a_k^n\chi_{P_n\setminus B_j}
  = \sum_{n=1}^{N_j} a_k^n\chi_{P_n^j},
\]
showing that $v_k|_{\Omega\setminus B_j}$ is built up of
$N_j$  infinitesimal rigid motions.

For each scale $j\ge 0$ let \[w_k^j =
  \Bigg(\sum_{Q\in\Good^{\delta_j}(u_k)} a_k^Q\chi_Q - v_k\Bigg)(1-\chi_{B_j}).\]
By construction, $\sup_k \sup_{x\in\Omega\setminus B_j} |w_k^j(x)|<+\infty$ and
since $v_k|_{\Omega\setminus B_j}$ is built up of a bounded number of
affine maps, the
sequence of functions $(w_k^j)_k$ is finite-dimensional, and we may
extract a subsequence such that it converges to some limit $w^j$.
By a diagonal argument, we may assume this is true for all $j\ge 0$.

For $e\in\R^d$, $|e|=1$,
$\sigma>0$,
we consider the sequences of functions $u^{e,\sigma}_k:=t_\sigma(e\cdot (u_k-v_k))$. We let $\omega_k^j:=\bigcup_{Q\in\Good^{\delta_j}(u_k)}\omega_k^Q$,
then $|\omega_k^j|\le c \delta_j\H^{d-1}(J_{u_k}^1)$. Thus:
\begin{equation}\label{eq:cauchy}
  \begin{aligned}
    \int_\Omega |u^{e,\sigma}_k-t_\sigma(e\cdot w_k^j)|dx
    \le\ &\sigma \left|B_j\cup \omega_k^j\right|
    +\int_{\Omega\setminus (B_j\cup \omega_k^j)}|e\cdot(u_k-w_k^j)|dx\\
    \le\ &\sigma \eta_j + \sum_{Q\in\Good^{\delta_j}(u_k)} \int_{Q\setminus \omega_k^Q} |u_k-a_k^Q|dx\\
    \le\ & \sigma \eta_j + C\delta_j
  \end{aligned}
\end{equation}
where we have let $\eta_j=|B_j|+ c\delta_j\sup_k\H^{d-1}(J_{u_k}^1)$,
and $C=c\sup_k \widehat{\mu}_{u_k}(\Omega\setminus J_{u_k}^1)<+\infty$,
and used~\eqref{eq:rigid}.
Using that $w_k^j-w_l^j\to
0$ as $k,l\to\infty$ in $L^1(\Omega)$, we find that:
\[
  \limsup_{k,l\to\infty} \int_\Omega |u^{e,\sigma}_k-u^{e,\sigma}_l|dx\le
  2(\sigma\eta_j + C\delta_j).
\]
Sending $j\to +\infty$ we find that $\limsup_{k,l\to\infty} \int_\Omega |u^{e,\sigma}_k-u^{e,\sigma}_l|dx=0$,
that is, $(u^{e,\sigma}_k)_k$ are Cauchy sequences and converge to some limit $u^{e,\sigma}$
in $L^1(\Omega)$.

We show now that the limit $u^{e,\sigma}$ is $t_\sigma(e\cdot u)$ for some well-defined
measurable function $u$.
Let us consider a subsequence such that $u^{e,1}_k\to u^{e,1}$ a.e.
In that case:
\begin{itemize}
\item  either $|u^{e,1}(x)|=1$, which happens if and only if
  $\lim_{k\to\infty} |e\cdot (u_k(x)-v_k(x)) |= +\infty$,
  and in particular for any $\sigma>0$,  $|u^{e,\sigma}(x)|=\sigma$;
\item or, by continuity, $e\cdot (u_k(x)-v_k(x))
  \to \tanh^{-1} (u^{e,1}(x))$, and we also have that $u^{e,\sigma}(x)= t_\sigma(\tanh^{-1} (u^{e,1}(x)))$ for any $\sigma>0$.
\end{itemize}
Let $A= \{x:|u^{e,1}(x)|=1\}=\{x:|u^{e,\sigma}(x)|=\sigma\}$: then, for $j\ge 0$,
\[
  \int_{A\setminus B_j} |u^{e,\sigma}_k-t_\sigma(e\cdot w_k^j)|dx \ge
  \int_{A\setminus B_j} |u^{e,\sigma}_k|dx -\int_{A\setminus B_j} |w_k^j|dx
  \stackrel{k\to\infty}{\longrightarrow}
  \sigma|A\setminus B_j| - \int_{A\setminus B_j} |w^j|dx.
\]
On the other hand thanks to~\eqref{eq:cauchy}:
\[
  \int_{A\setminus B_j} |u^{e,\sigma}_k-t_\sigma(e\cdot w_k^j)|dx \le\sigma \eta_j+C\delta_j,
\]
hence:
\[
  \sigma|A\setminus B_j| \le  \int_{A\setminus B_j} |w^j|dx + \sigma \eta_j+C\delta_j.
\]
Dividing by $\sigma$ and letting $\sigma\to\infty$, we deduce:
\[
  |A\setminus B_j|\le \eta_j,
\]
so that $|A|=0$. It follows that $\tanh^{-1} u^{e,1}$ is finite a.e.

To sum up, we have shown that for any $e$ with $|e|=1$, there is a
measurable function $u^e$ such that $t_\sigma(e\cdot (u_k-v_k))\to t_\sigma(u^e)$
in $L^1(\Omega)$ for any $\sigma>0$. It is then obvious to check that $u^e=e\cdot u$
for some measurable vector-valued function $u$, and, up to a subsequence, to
deduce that $u_k-v_k\to u$ a.e.~in $\Omega$.

\subsection{Lower semicontinuity}\label{sec:lsc}
We argue similarly to what done in \cite[Step~2 in Section~3]{ChaCri20AnnSNS} to prove the $GSBD^p$ analogue of \eqref{eq:sciSaltoInf}, that is \cite[equation~(1.5d)]{ChaCri20AnnSNS}.

Let us fix $\sigma>1$, $\xi \in \Sn$ in a set of full $\hn$-measure of $\Sn$ for which \eqref{2502200925} holds (\textit{cf.}~Lemma~\ref{le:2502200924}), and define
\begin{equation}\label{0101182132}
\I(u_k):=|\mathrm{D}(\widehat{u}_k)\xy|\big(\Omega\xy \sm J^\sigma_{(\widehat{u}_k)\xy}\big)\,.
\end{equation}
Since 
\[
\I(u_k)\leq (\widehat{\mu}_{u_k})\xy(\Omega\xy)+(\sigma-1) \mathcal{H}^0\big(\Omega\xy \cap (J^1_{(\widehat{u}_k)\xy}\sm J^\sigma_{(\widehat{u}_k)\xy})\big)\leq \sigma (\widehat{\mu}_{u_k})\xy(\Omega\xy),
\]
it holds that
\begin{equation}\label{0908221020}
\int_{\Pi^\xi} \I(u_k) \dh(y) \leq \sigma \widehat{\mu}^\xi_{u_k}(\Omega) \leq \sigma \sup_{k\in \N} \widehat{\mu}_{u_k}(\Omega).
\end{equation}
Following exactly \cite[Step~2 in Section~3]{ChaCri20AnnSNS} for $\sigma>1$ fixed (using \eqref{0908221020} in place of \cite[estimate (3.11)]{ChaCri20AnnSNS}), we get that all the \cite[(3.12)-(3.18)]{ChaCri20AnnSNS} hold for $J_v$ replaced by $J^\sigma_v$ and $\widehat{I}\xy$ replaced by $\I$. In particular, for $\hn$-a.e.\ $\xi \in \Sn$ and $\hn$-a.e.\ $y\in \Pi_\xi$, along a suitable subsequence $(\cdot)_j$ depending on $\sigma$, $\xi$, $\varepsilon \in (0,\sigma^{-2})$ fixed as in \cite[(3.13)]{ChaCri20AnnSNS}, and $y$, it holds that 
\begin{equation}\label{1203220933}
(\widehat{u}_j-\widehat{a}_j)\xy \to \widehat{u}\xy \quad\mathcal{L}^1\text{-a.e.\ in }\Omega\xy,
\end{equation}
\begin{equation}\label{2403201949}
 |( \widehat{a}^{i_1}_j-\widehat{a}^{i_2}_j)\xy(t)|=|( \widehat{a}^{i_1}_j-\widehat{a}^{i_2}_j)\xy(0)| \to +\infty \quad\text{for }t\in \Omega\xy \text{ and }i_1 \neq i_2,
\end{equation}
\begin{equation}\label{0101182346}
\lim_{j\to \infty} \Big( \mathcal{H}^0\big(J^\sigma_{(\widehat{u}_j)\xy}\big) + \varepsilon \I(u_j)\Big)=\liminf_{m\to \infty} \Big( \mathcal{H}^0\big(J^\sigma_{(\widehat{u}_m)\xy}\big) + \varepsilon \I(u_m) \Big) =M(y) \in \R
\end{equation}
for $(\cdot)_m$ a subsequence of $(\cdot)_k$ independent of $y$ such that 
\begin{equation}\label{0908221224}
H^\sigma_{\varepsilon,\xi}:=\lim_{m\to\infty} \int_{\Pi_\xi} \Big( \mathcal{H}^0\big(J^\sigma_{(\widehat{u}_m)\xy}\big) + \varepsilon \I(u_m) \Big) \dh(y)\in \R
\end{equation}
and
\begin{equation}\label{0908221224'}
\limsup_{\varepsilon\to 0}\int_{\Sn} H^\sigma_{\varepsilon,\xi}\dh(\xi)\leq \liminf_{k\to \infty} \hn(J^\sigma_{u_k}).
\end{equation}
Denoting 
\begin{equation*}
\partial \mathcal{P}\xy:= \bigcup_{n\in \N} \partial \left((P_n)\xy\right)  \cap \Omega\xy  \subset \Omega\xy,
\end{equation*}
by \eqref{0101182346} we may assume, up to a further subsequence, that $\mathcal{H}^0\big(J^\sigma_{(\widehat{u}_j)\xy}\big)=N_y\in \N$ for every $j$ and so that there are $\widehat{N}_y\leq N_y$ cluster points in the limit, denoted by 
\[t_1,\dots ,t_{\widehat{N}_y}.\]
Therefore we have that $K \cap J^\sigma_{(\widehat{u}_j)\xy}=\emptyset$ for any $K$ compact subset of $(t_l, t_{l+1})$, so $|\mathrm{D}(\widehat{u}_k)\xy|\big(K \sm J^\sigma_{(\widehat{u}_j)\xy}\big)=|\mathrm{D}(\widehat{u}_j)\xy|(K)$; with \eqref{0101182346} and the Fundamental Theorem of Calculus, this gives that,
for $\mathcal{L}^1$-almost any choice of $ \ol t \in (t_l, t_{l+1})$, 
\begin{equation}\label{2502201338}
 t  \mapsto (\widehat{u}_j)\xy( t ) - (\widehat{u}_j)\xy( \ol t ) \text{ are equibounded  w.r.t.\  $j$ in } BV_\mathrm{loc}(t_l, t_{l+1}),
\end{equation} 
so the bound above is also in $L^\infty_{\mathrm{loc}}(t_l, t_{l+1})$. 
At this stage we prove, as in \cite[(3.20)]{ChaCri20AnnSNS}, that
 \begin{equation}\label{2502201307}
 \partial \mathcal{P}\xy \subset \{ t_1,\dots ,t_{\widehat{N}_y}\}:
\end{equation}  
in fact, assuming by contradiction that there exists $l\in \{1,\dots, M_y\}$ and $i_1\neq i_2$ such that $\partial (P_{i_1})\xy \cap (t_l, t_{l+1}), \partial (P_{i_2})\xy \cap (t_l, t_{l+1})\neq \emptyset$, by \eqref{1203220933} there are two corresponding sequences of infinitesimal rigid motions $(a_j^{i_1})_j$, $(a_j^{i_2})_j$ for which
\begin{equation}\label{2502201322}
\begin{split}
&(\widehat{u}_j - \widehat{a}_j^{i_1})\xy \to \widehat{u}\xy \quad\mathcal{L}^1\text{-a.e.\ in } (P_{i_1})\xy \cap (t_l, t_{l+1}), \\
& (\widehat{u}_j - \widehat{a}_j^{i_2})\xy\to \widehat{u}\xy\quad\mathcal{L}^1\text{-a.e.\ in } (P_{i_2})\xy \cap (t_l, t_{l+1}),
\end{split}
\end{equation}
with $\mathcal{L}^1\big(  (P_{i_1})\xy  \cap (t_l, t_{l+1})  \big),\, \mathcal{L}^1\big(  (P_{i_2})\xy  \cap (t_l, t_{l+1})  \big)>0$; this gives (with \eqref{2502201338} and since $\widehat{a}_j^i$ are infinitesimal rigid motions and $\widehat{u}\xy\colon \Omega\xy \to \R$) that 
$( \widehat{a}^{i_1}_j-\widehat{a}^{i_2}_j)\xy $ is constant in $\Omega\xy$ and uniformly bounded   w.r.t.\  $j$, in contradiction with \eqref{2403201949}. Therefore, \eqref{2502201307} is proven. 

Integrating \eqref{2502201307} over $y\in\Pi^\xi$ and using Fatou's lemma with \eqref{0101182346}, \eqref{0908221224} we deduce that
\begin{equation}\label{0201181712}
\begin{split}
\int \limits_{\Pi^\xi} \ho(\partial \mathcal{P}\xy) \,\mathrm{d}\hn(y) \leq \lim_{m\to\infty} \int\limits_{\Pi^\xi}\Big( \ho\big(J^\sigma_{(\widehat{u}_m)\xy}\big) + \varepsilon \, \I(u_m) \Big)\,\mathrm{d}\hn(y) 
\end{split}
\end{equation}
for every $\sigma>1$ and $\hn$-a.e.\ $\xi \in \Sn$ (in view of the choice of the subsequences, see \cite{ChaCri20AnnSNS}). This implies that $\mathcal{P}$ is a Caccioppoli partition.

Moreover, integrating over $\xi \in \Sn$ we get that
\begin{equation}\label{0201181712'}
\begin{split}
\hn(\partial^*\mathcal{P}\cap \Omega) &\leq  C\varepsilon \sigma \sup_{k\in \N} \widehat{\mu}_{u_k}(\Omega) + \int_{\Sn} H^\sigma_{\varepsilon,\xi}\dh(\xi) \\& \leq C\sqrt{\varepsilon} \sup_{k\in \N} \widehat{\mu}_{u_k}(\Omega) + \int_{\Sn} H^\sigma_{\varepsilon,\xi}\dh(\xi)
\end{split}
\end{equation}
for a universal constant $C>0$ and every $\sigma>1$, $\varepsilon \in (0, \sigma^{-2})$.
Letting $\varepsilon\to 0$, in view of \eqref{0908221224'} and the arbitrariness of $\sigma>1$ we conclude \eqref{eq:sciSaltoInf}.

Let us now confirm that $u \in GBD(\Omega)$.
For any $\xi \in \Sn$ and $\hn$-a.e.\ $y\in \Pi^\xi$, setting $\tilde{u}_k:=u_k-a_k$ for any $k\in \N$, it holds that
\begin{equation*}
(\widehat{\mu}_{\tilde{u}_k})\xy(B)\leq(\widehat{\mu}_{u_k})\xy(B) + \ho(\partial^* \mathcal{P}\xy \cap B) \quad \text{for every $B\subset \Omega\xy$ Borel},
\end{equation*}
since $a_k$ is a piecewise rigid motion constant on every $P_j$, $j\in \N$, where $\mathcal{P}=(P_j)_j$. Integrating over $\Pi_\xi$ and recalling \eqref{1707221155} we have that
\begin{equation*}
\widehat{\mu}_{\tilde{u}_k}(B)\leq \widehat{\mu}_{u_k}(B) + \hn(\partial^* \mathcal{P} \cap B) \quad \text{for every $B\subset \Omega$ Borel}.
\end{equation*}
Since $\tilde{u}_k$  pointwise converges $\Ln$-a.e.\ to $u$, there is an increasing function $\psi_0\colon \R^+\to \R^+$ with $\lim_{s\to +\infty}\psi_0(s)=+\infty$ such that $\|\psi_0(\tilde{u}_k)\|_{L^1(\Omega)}$ is uniformly bounded w.r.t.\ $k\in \N$ (see e.g.\ \cite[Lemma~2.1]{Fri19}).
Then we may apply \cite[Corollary~11.2]{DM13} to deduce that $u \in GBD(\Omega)$.

This concludes the proof of Theorem~\ref{thm:main}.

\begin{remark}\label{rem:GSBDp}
Let us consider a sequence $(u_k)_k$ such that
\begin{equation}\label{eq:bdGSBDp}
\int_\Omega |e(u_k)|^p \dx + \hn(J_{u_k}) \leq M, \quad p>1
\end{equation}
for $M>0$ independent of $k\in \N$. Applying Theorem~\ref{thm:main} we obtain the compactness part of \cite[Theorem~1.1]{ChaCri20AnnSNS} (that is \cite[(1.5b)]{ChaCri20AnnSNS} for $u \in GBD(\Omega)$) without using the Korn-type inequality in \cite{CagChaSca20}. Combining this with the last part of the proof (\cite[Steps~2-3 in proof of Theorem~1.1]{ChaCri20AnnSNS}) we obtain \cite[Theorem~1.1]{ChaCri20AnnSNS}, and
in particular~\eqref{eq:sciSaltoInfp}.
Nevertheless, the result in \cite{CagChaSca20} is crucial for \cite[Theorem~1.2]{ChaCri20AnnSNS}.
\end{remark}



\end{document}